\numberwithin{equation}{section}
\newtheorem{theorem}{Theorem}[section]
\newtheorem{definition}{Definition}[section]
\begin{document}
\begin{center}
{\Large{\textbf{A Note on the Gutman Index of Jaco Graphs}}} 
\end{center}
\vspace{0.5cm}
\centerline{\large{Johan Kok}} 
\centerline{\small{Tshwane Metropolitan Police Department}}
\centerline{\small{City of Tshwane, Republic of South Africa}}
\centerline{\tt {kokkiek2@tshwane.gov.za}}
\vspace{0.5cm}
\centerline{\large{Susanth C}} 
\centerline{\small{Department of Mathematics}}
\centerline{\small{Vidya Academy of Science and Technology}}
\centerline{\small{Thalakkottukara, Thrissur-680501, India}}
\centerline{\tt {susanth\_c@yahoo.com}}
\vspace{0.5cm}
\centerline{\large{Sunny Joseph Kalayathankal}} 
\centerline{\small{Department of Mathematics}}
\centerline{\small{Kuriakose Elias College}}
\centerline{\small{Mannaman, Kottayam-686561, India}}
\centerline{\tt {sunnyjoseph2014@yahoo.com}}
\vspace{0.5cm}
\begin{abstract}
\noindent The concept of the \emph{Gutman index,} denoted $Gut(G)$ was introduced for a connected undirected graph $G$. In this note we apply the concept to the underlying graphs of the family of Jaco graphs, (\emph{directed graphs by definition}), and decribe a recursive formula for the \emph{Gutman index} $Gut(J^*_{n+1}(x))$. We also determine the \emph{Gutman index} for the trivial \emph{edge-joint} between Jaco graphs.
\end{abstract}
\noindent {\footnotesize \textbf{Keywords:} Gutman index, Jaco graph, edge-joint}\\ \\
\noindent {\footnotesize \textbf{AMS Classification Numbers:} 05C12, 05C20, 05C38, 05C40, 05C75} 
\section{Introduction}
\noindent For general reference to notation and concepts of graph theory see [2]. Unless mentioned otherwise, a graph $G = G(V,E)$ on $\nu(G)$ vertices (order of $G$) with $\epsilon(G)$ edges (size of $G$) will be a finite undirected and connected simple graph. The degree of a vertex in $G$ is denoted $d_G(v)$ and if the context of $G$ is clear the degree is denoted $d(v)$ for brevity. Also in a directed graph $G^\rightarrow$ the degree is $d_{G^\rightarrow}(v) = d^+_{G^\rightarrow}(v) +  d^-_{G^\rightarrow}(v)$ or for brevity, $d(v) = d^+(v) +  d^-(v)$ if $G$ is clear. \\\\
The concept of the Gutman index $Gut(G)$ of a connected undirected graph $G$ was introduced in 1994 by Gutman [4]. It is defined to be $Gut(G) = \sum\limits_{\{v, u\} \subseteq V(G)} d_G(v)d_G(u)d_G(v, u),$ where $d_G(v)$ and $d_G(u)$ are the degree of $v$ and $u$ in $G$ respectively, and $d_G(v, u)$ is the distance between $v$ and $u$ in $G$. Clearly, if the vertices of $G$ of order $n$ are randomly labeled $v_1, v_2, v_3, ..., v_n$ the definition states that $Gut(G) = \sum\limits_{\ell=1}^{n-1}\sum\limits_{j=\ell+1}^{n}d_G(v_\ell)d_G(v_j)d_G(v_\ell, v_j).$ Worthy results are reported in Andova et al. [1] and Dankelmann et al. [3].
\section{The Gutman Index of the Underlying Graph of a Jaco Graph}
Despite earlier definitions in respect of the family of Jaco graphs [5, 6], the definitions found in [7] serve as the unifying definitions. For ease of reference some of the important definitions are repeated here.
\begin{definition}$[7]$
Let $f(x) = mx + c; x \in \Bbb N,$ $m,c\in \Bbb N_0.$ The family of infinite linear Jaco graphs denoted by $\{J_\infty(f(x)):f(x) = mx + c; x \in \Bbb N$ and $m,c \in \Bbb N_0\}$ is defined by $V(J_\infty(f(x))) = \{v_i: i \in \Bbb N\}$, $A(J_\infty(f(x))) \subseteq \{(v_i, v_j): i, j \in \Bbb N, i< j\}$ and $(v_i,v_ j) \in A(J_\infty(f(x)))$ if and only if $(f(i) + i) - d^-(v_i) \geq j.$
\end{definition}
\begin{definition}$[7]$
The family of finite linear Jaco graphs denoted by $\{J_n(f(x)):f(x) = mx + c; x \in \Bbb N$ and $m,c\in \Bbb N_0\}$ is defined by $V(J_n(f(x))) = \{v_i: i \in \Bbb N, i \leq n \}$, $A(J_n(f(x))) \subseteq \{(v_i, v_j): i,j \in \Bbb N, i< j \leq n\}$ and $(v_i,v_ j) \in A(J_n(f(x)))$ if and only if $(f(i) + i) - d^-(v_i) \geq j.$
\end{definition} 
The reader is referred to [7] for the definition of the \emph{prime Jaconian vertex} and the \emph{Hope graph}. The graph has four fundamental properties which are:\\
(i) $V(J_\infty(f(x))) = \{v_i:i \in \Bbb N\}$ and,\\
(ii) if $v_j$ is the head of an arc then the tail is always a vertex $v_i,$ $i<j$ and,\\
(iii) if $v_k,$ for smallest $k \in \Bbb N$ is a tail vertex then all vertices $v_ \ell,$ $k< \ell<j$ are tails of arcs to $v_j$ and finally,\\
(iv) the degree of vertex $k$ is $d(v_k) = f(k).$\\ \\
The family of finite directed graphs are those limited to $n \in \Bbb N$ vertices by lobbing off all vertices (and arcs to vertices) $v_t, t > n.$ Hence, trivially $d(v_i) \leq i$ for $i \in \Bbb N.$  For $m=0$ and $c \geq 0$ two special classes of disconnected linear Jaco graphs exist. For $c= 0$ the Jaco graph $J_n(0)$  is a null graph (\emph{edgeless graph}) on $n$ vertices. For $c> 0$, the Jaco graph $J_n(c) = \bigcup\limits_{\lfloor\frac{n}{c+1}\rfloor-copies}K^\rightarrow_{c+1} \bigcup K^\rightarrow_{n-(c+1)\cdot \lfloor\frac{n}{c+1}\rfloor}.$ since the Gutman index is defined for connected graphs the bound $m\geq 1$ will apply.\\\\
In this note we only consider the case $m = 1$, $c=0.$ The generalisation for $f(x) = mx + c$ in general remains open. Denote the underlying Jaco graph by $J^*_n(f(x))$. A recursive formula of the Gutman index $Gut(J^*_{n+1}(x))$ in terms of $Gut(J_n^*(x))$ is given in the next theorem.
\begin{theorem}
For the underlying graph $J^*_n(x)$ of a finitie Jaco Graph $J_n(x), n \in \Bbb N, n \geq 2$ with Jaconian vertex $v_i$ we have that recursively:\\ \\
$Gut(J^*_{n+1}(x)) = Gut(J^*_n(x)) +\sum\limits_{k=1}^{i}\sum\limits_{t= i+1}^{n}d_{J^*_n(x)}(v_k)d_{J^*_n(x)}(v_k, v_t) + \sum\limits_{t=i+1}^{n-1}\sum\limits_{q = t+1}^{n}(d_{J^*_n(x)}(v_t) + d_{J^*_n(x)}(v_q)) + (n-i)(\sum\limits_{k=1}^{i}d_{J^*_n(x)}(v_k)d_{J^*_n(x)}(v_k, v_n) + \sum\limits_{t=i+1}^{n}d_{J^*_n(x)}(v_t)) + (n- i-1) + i(n-i).$
\end{theorem}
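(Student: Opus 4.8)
The plan is to prove the recursion by directly comparing the defining double sum for $Gut(J^*_{n+1}(x))$ with that for $Gut(J^*_n(x))$, classifying the vertex pairs of $J^*_{n+1}(x)$ into those lying entirely in $V(J^*_n(x))$ and those incident with the new vertex $v_{n+1}$. Before doing so I would establish three structural facts about how $J^*_{n+1}(x)$ arises from $J^*_n(x)$, all consequences of the fundamental properties (i)--(iv) and the notion of the prime Jaconian vertex from [7]. First, the vertex $v_{n+1}$ is adjacent in $J^*_{n+1}(x)$ precisely to the vertices $v_{i+1}, v_{i+2}, \dots, v_n$ lying beyond the prime Jaconian vertex $v_i$; hence $d_{J^*_{n+1}(x)}(v_{n+1}) = n-i$, and since each of these vertices reaches past $v_n$, property (iii) forces $\{v_{i+1},\dots,v_n\}$ to induce a clique already in $J^*_n(x)$. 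Second, because the only new adjacencies lie inside this clique, no shortest path between two old vertices can be improved by routing through $v_{n+1}$, so $d_{J^*_{n+1}(x)}(v_\ell,v_j) = d_{J^*_n(x)}(v_\ell,v_j)$ for all $v_\ell,v_j \in V(J^*_n(x))$. Third, exactly the vertices $v_{i+1},\dots,v_n$ each gain one unit of degree (their new neighbour $v_{n+1}$), while $v_1,\dots,v_i$ retain their degrees.

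With these in hand I would first treat the pairs $\{v_\ell,v_j\}\subseteq V(J^*_n(x))$. Writing each new degree as $d_{J^*_n(x)}(v_\bullet)$ plus an indicator that the vertex lies in $\{v_{i+1},\dots,v_n\}$, and using distance-preservation, the contribution of these pairs expands as $Gut(J^*_n(x))$ plus corrections coming from the degree increases. The cross terms between a degree-increased clique vertex and an arbitrary vertex produce the double sum $\sum_{k=1}^{i}\sum_{t=i+1}^{n} d_{J^*_n(x)}(v_k)\,d_{J^*_n(x)}(v_k,v_t)$, and the terms in which both endpoints lie in the clique produce $\sum_{t=i+1}^{n-1}\sum_{q=t+1}^{n}\bigl(d_{J^*_n(x)}(v_t)+d_{J^*_n(x)}(v_q)\bigr)$, the latter because each clique pair is at distance one; the product of the two degree increments over clique pairs contributes a lower-order count that I would carry into the constant terms of the statement. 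This reproduces the first two explicit summations.

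The remaining work is the contribution of the $n$ pairs $\{v_{n+1},v_k\}$. Here the factor $d_{J^*_{n+1}(x)}(v_{n+1}) = n-i$ pulls out, and I would split according to whether $v_k$ lies in the clique or satisfies $k\le i$. For a clique vertex $v_t$ we have $d_{J^*_{n+1}(x)}(v_{n+1},v_t)=1$, and including its degree increment contributes the part $(n-i)\sum_{t=i+1}^{n} d_{J^*_n(x)}(v_t)$ of term $C$. For $k\le i$ the central identity to prove is that a geodesic from $v_{n+1}$ must first step into the clique, giving $d_{J^*_{n+1}(x)}(v_{n+1},v_k) = 1 + \min_{i+1\le t\le n} d_{J^*_n(x)}(v_k,v_t)$, and that for $k<i$ this equals $d_{J^*_n(x)}(v_k,v_n)$; this yields the factor $(n-i)\sum_{k=1}^{i} d_{J^*_n(x)}(v_k)\,d_{J^*_n(x)}(v_k,v_n)$ in term $C$. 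The constants $(n-i-1)$ and $i(n-i)$ then collect the residual counting together with the boundary correction at $k=i$.

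I expect the main obstacle to be exactly this distance computation. The delicate point is that the prime Jaconian vertex $v_i$ is itself adjacent in $J^*_n(x)$ to the entire clique $v_{i+1},\dots,v_n$, whereas $v_{n+1}$ is not adjacent to $v_i$; consequently the clean substitution $d_{J^*_{n+1}(x)}(v_{n+1},v_k)=d_{J^*_n(x)}(v_k,v_n)$, valid for $k<i$, breaks by one step at $k=i$. Isolating this exceptional vertex, and separately verifying that for $k<i$ a shortest path from $v_{n+1}$ does realise $d_{J^*_n(x)}(v_k,v_n)$, is what forces the appearance of the correction term $i(n-i)$ together with $n-i-1$. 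I would therefore record the required geodesic behaviour as a standalone lemma, arguing from property (iii) that from any $v_k$ with $k<i$ the clique is first entered at its lowest-indexed member, and dispose of $k=i$ directly; the rest of the theorem is then routine bookkeeping and regrouping of the four resulting sums.
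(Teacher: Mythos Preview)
Your overall strategy is exactly the paper's: split the unordered pairs of $J^*_{n+1}(x)$ into the five blocks determined by whether each endpoint lies in $\{v_1,\dots,v_i\}$, in the Hope-graph clique $\{v_{i+1},\dots,v_n\}$, or is $v_{n+1}$, and expand the Gutman sum block by block using the degree increments on the clique vertices together with the fact that distances between old vertices are preserved. Your three structural observations are precisely the facts the paper invokes at the outset.

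Where you diverge from the paper is in the treatment of the pairs $\{v_k,v_{n+1}\}$ with $k\le i$. You posit $d_{J^*_{n+1}(x)}(v_{n+1},v_k)=d_{J^*_n(x)}(v_k,v_n)$ for $k<i$, with a one-step break at $k=i$ that you propose to isolate via a separate geodesic lemma and then absorb into the constants $(n-i-1)$ and $i(n-i)$. The paper does none of this: it uses the single uniform identity
\[
d_{J^*_{n+1}(x)}(v_k,v_{n+1})=d_{J^*_n(x)}(v_k,v_n)+1\qquad\text{for every }k=1,\dots,i,
\]
obtained simply by appending the edge $v_nv_{n+1}$ to a geodesic from $v_k$ to $v_n$. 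With this identity the term $i(n-i)$ drops out of the paper's Step~4 directly (it is the ``$+1$'' contribution, multiplied by $d_{J^*_{n+1}(x)}(v_{n+1})=n-i$ and summed over $k=1,\dots,i$), while the constant $(n-i-1)$ is produced entirely by the clique-pair expansion in Step~3, where every pair has distance~$1$. There is no exceptional vertex and no boundary correction at $k=i$ in the paper's argument, so the bookkeeping route you sketch for these two constants would not reproduce the stated formula; your claimed identity $1+\min_{i+1\le t\le n}d_{J^*_n(x)}(v_k,v_t)=d_{J^*_n(x)}(v_k,v_n)$ already fails in small cases (e.g.\ $n=8$, $i=5$, $k\le 3$).
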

\begin{proof}
Consider the underlying Jaco graph, $J^*_n(x), n \in \Bbb N, n \geq 2$ with prime Jaconian vertex $v_i$. Now consider $J^*_{n+1}(x).$ From the definition of a Jaco graph the extension from $J^*_n(x)$ to $J^*_{n+1}(x)$ adds the vertex $v_{n+1}$ and the edges $v_{i+1}v_{n+1}, v_{i+2}v_{n+1}, ..., v_nv_{n+1}.$\\ \\
\noindent Step 1: Consider any ordered pair of vertices $(v_k, v_q)_{k < q}, 1 \leq k \leq i-1,$ and $k+1 \leq q \leq i.$ By applying the definition of the Gutman index to this pair of vertices we have the term:\\ \\
$d_{J^*_{n+1}(x)}(v_k)d_{J^*_{n+1}(1)}(v_q)d_{J^*_{n+1}(x)}(v_k, v_q) = d_{J^*_n(x)}(v_k)d_{J^*_n(x)}(v_q)d_{J^*_n(x)}(v_k, v_q).$\\ \\
By applying this step $\forall v_k,1 \leq k \leq i-1,$ and $\forall v_q, k+1 \leq q \leq i$ with $k < q$ we obtain:\\ \\
$\sum\limits_{k=1}^{i-1}\sum\limits_{q=k+1}^{i}d_{J^*_n(x)}(v_k)d_{J^*_n(x)}(v_q)d_{J^*_n(x)}(v_k, v_q).$\\ \\
\noindent Step 2: Consider any vertex $v_k, 1\leq k \leq i$ and any other vertex $v_t, i+1 \leq t \leq n.$ By applying the definition of the Gutman index to this pair of vertices we have the term:\\ \\
$d_{J^*_{n+1}(x)}(v_k)d_{J^*_{n+1}(x)}(v_t)d_{J^*_{n+1}(x)}(v_k, v_t) = d_{J^*_n(x)}(v_k)(d_{J^*_n(x)}(v_t) + 1)d_{J^*_n(x)}(v_k, v_t) =\\ \\
d_{J^*_n(x)}(v_k)d_{J^*_n(x)}(v_t)d_{J^*_n(x)}(v_k, v_t) + d_{J^*_n(x)}(v_k)d_{J^*_n(x)}(v_k, v_t).$\\ \\
By applying this step $\forall v_k, 1 \leq k \leq i$ and $\forall v_t, i+1 \leq t \leq n$, we obtain:\\ \\ $\sum\limits_{k=1}^{i}\sum\limits_{t=i+1}^{n}d_{J^*_n(x)}(v_k)d_{J^*_n(x)}(v_t)d_{J^*_n(x)}(v_k, v_t) + \sum\limits_{k=1}^{i}\sum\limits_{t=i+1}^{n}d_{J^*_n(x)}(v_k)d_{J^*_n(x)}(v_k, v_t).$\\ \\
\noindent Step 3: Consider any two distinct vertices $v_t, v_q, i+1 \leq t \leq n-1,$ and $t+1 \leq q \leq n.$ By applying the definition of the Gutman index to this pair of vertices we have the term:\\ \\
$d_{J^*_{n+1}(x)}(v_t)d_{J^*_{n+1}(x)}(v_q)d_{J^*_{n+1}(x)}(v_t, v_q) = (d_{J^*_n(x)}(v_t)+1)(d_{J^*_n(x)}(v_q) + 1)d_{J^*_n(x)}(v_t, v_q) =\\ \\
d_{J^*_n(x)}(v_t)d_{J^*_n(x)}(v_q) + d_{J^*_n(x)}(v_t) + d_{J^*_n(x)}(v_q) + 1.$\\ \\
By applying this step $\forall v_t, i+1 \leq t \leq n-1$ and $\forall v_q, t+1 \leq q \leq n$, we obtain:\\ \\
$\sum\limits_{t=i+1}^{n-1}\sum\limits_{q = t+1}^{n}d_{J^*_n(x)}(v_t)d_{J^*_n(x)}(v_q) + \sum\limits_{t=i+1}^{n-1}\sum\limits_{q = t+1}^{n}(d_{J^*_n(x)}(v_t) + d_{J^*_n(x)}(v_q)) + (n- i-1).$\\ \\
\noindent Step 4: Consider any vertex $v_k, 1\leq k \leq i$ and the vertex $v_{n+1}.$ By applying the definition of the Gutman index to this pair of vertices we have the term:\\ \\
$d_{J^*_{n+1}(x)}(v_k)d_{J^*_{n+1}(x)}(v_{n+1})d_{J^*_{n+1}(x)}(v_k, v_{n+1}) = d_{J^*_n(x)}(v_k)(n-i)(d_{J^*_n(x)}(v_k, v_n) + 1).$ \\ \\
By applying this step $\forall v_k, 1 \leq k \leq i$ we obtain:\\ \\
$\sum\limits_{k=1}^{i}d_{J^*_n(x)}(v_k)(n-i)(d_{J^*_n(x)}(v_k, v_n) + 1) = (n-i)\sum\limits_{k=1}^{i}d_{J^*_n(x)}(v_k)d_{J^*_n(x)}(v_k, v_n) + i(n-i).$\\ \\
\noindent Step 5: Consider any vertex $v_t, i+1\leq t \leq n$ and the vertex $v_{n+1}.$ By applying the definition of the Gutman index to this pair of vertices we have the term:\\ \\
$d_{J^*_{n+1}(x)}(v_t)d_{J^*_{n+1}(x)}(v_{n+1})d_{J^*_{n+1}(x)}(v_t, v_{n+1}) = d_{J^*_n(x)}(v_t)(n-i)d_{J^*_n(x)}(v_t, v_n).$ \\ \\
By applying this step $\forall v_t, i+1 \leq t \leq n$ we obtain:\\ \\
$\sum\limits_{t=i+1}^{n}d_{J^*_n(x)}(v_t)(n-i) = (n-i)\sum\limits_{t=i+1}^{n}d_{J^*_n(x)}(v_t).$\\ \\ \\
\noindent \textbf{Final summation step:} Adding Steps 1 to 5 and noting that:\\ \\
$Gut(J^*_n(x)) = \sum\limits_{k=1}^{i-1}\sum\limits_{q=k+1}^{i}d_{J^*_n(x)}(v_k)d_{J^*_n(x)}(v_q)d_{J^*_n(x)}(v_k, v_q) + \sum\limits_{k=1}^{i}\sum\limits_{t=i+1}^{n}d_{J^*_n(x)}(v_k)d_{J^*_n(x)}(v_t)d_{J^*_n(x)}(v_k, v_t) +\\ \\ \sum\limits_{t=i+1}^{n-1}\sum\limits_{q = t+1}^{n}d_{J^*_n(x)}(v_t)d_{J^*_n(x)}(v_q),$ provides the result:\\ \\
$Gut(J^*_{n+1}(x)) = Gut(J^*_n(x)) +\sum\limits_{k=1}^{i}\sum\limits_{t=i+1}^{n}d_{J^*_n(x)}(v_k)d_{J^*_n(x)}(v_k, v_t) + \sum\limits_{t=i+1}^{n-1}\sum\limits_{q = t+1}^{n}(d_{J^*_n(x)}(v_t) + d_{J^*_n(x)}(v_q)) + (n-i)(\sum\limits_{k=1}^{i}d_{J^*_n(x)}(v_k)d_{J^*_n(x)}(v_k, v_n) + \sum\limits_{t=i+1}^{n}d_{J^*_n(x)}(v_t)) + (n- i-1) + i(n-i).$
\end{proof}
\section{The Gutman Index of the Edge-joint between $J^*_n(x), n \in \Bbb N$ and $J^*_m(x), m \in \Bbb N$} 
The concept of an \emph{edge-joint} between two simple undirected graphs $G$ and $H$ is defined below.
\begin{definition}
The edge-joint of two simple undirected graphs $G$ and $H$ is the graph obtained by linking the edge $vu,$ $v \in V(G), u \in V(H)$ and denoted, $G\rightsquigarrow_{vu}H.$
\end{definition}
\noindent Note: $G\rightsquigarrow_{vu}H = G \cup H + vu,$ $v \in V(G), u \in V(H).$\\ \\
The next theorem provides $Gut(J^*_n(x)\rightsquigarrow_{v_1u_1}J^*_m(x))$ in terms of $Gut(J^*_n(x))$ and $Gut(J^*_m(x))$. The edge-joint $J^*_n(x)\rightsquigarrow_{v_1u_1}J^*_m(x)$ is called \emph{trivial}. Edge-joints $J^*_n(x)\rightsquigarrow_{v_iu_j}J^*_m(x)$, $i\neq 1$ or $j\neq 1$ are called \emph{non-trivial}. For families (classes) of graphs such as paths $P_n$, cycles $C_n$, complete graphs $K_n$, Jaco graphs $J_n(f(x))$, etc, the notation is abbreviated as $P_n\rightsquigarrow_{vu}P_m = P^{\rightsquigarrow_{uv}}_{n,m}$ and $J^*_n(f(x))\rightsquigarrow_{v_iu_j}J^*_m(f(x)) = J^{\rightsquigarrow_{v_iu_j}}_{n,m},$ etc.
\begin{theorem}
For the underlying graphs $J^*_n(x)$ and $J^*_m(x)$ of the finitie Jaco Graphs $J_n(x), J_m(x),\\ n,m \in \Bbb N$ and $n \geq m \geq 2$:\\ \\
$Gut(J^*_n(x)\rightsquigarrow_{v_1u_1}J^*_m(x)) = Gut(J^{\rightsquigarrow_{v_1u_1}}_{n,m}) = Gut(J^*_n(x)) + Gut(J^*_m(x)) + \sum\limits_{\ell=2}^{n}d_{J^*_n(x)}(v_\ell)d_{J^*_n(x)}(v_1, v_\ell) +\\ \\ \sum\limits_{s=2}^{m}d_{J^*_m(x)}(u_s)d_{J^*_m(x)}(u_1, u_s) + \sum\limits_{t=2}^{m}(d_{J^*_n(x)}(v_1)+1)d_{J^*_m(x)}(u_t)(d_{J^*_m(x)}(u_1, u_t)+1) +\\ \\ \sum\limits_{k=2}^{n}\sum\limits_{t=2}^{m}d_{J^*_n(x)}(v_k)d_{J^*_m(x)}(u_t)(d_{J^*_n(x)}(v_1, v_k) + d_{J^*_m(x)}(u_1, u_t) + 1) + 4.$
\end{theorem}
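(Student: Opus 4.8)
The plan is to apply the definition of the Gutman index directly to the edge-joint $\Gamma = J^{\rightsquigarrow_{v_1u_1}}_{n,m}$, which by the Note equals $J^*_n(x)\cup J^*_m(x) + v_1u_1$; write $G = J^*_n(x)$ and $H = J^*_m(x)$ for brevity. First I would isolate the two structural facts that drive everything. For degrees: the only new edge is $v_1u_1$, incident to no vertex other than $v_1$ and $u_1$, so $d_\Gamma(v_1) = d_G(v_1)+1$ and $d_\Gamma(u_1)=d_H(u_1)+1$, while every other degree is inherited unchanged. For distances: the edge $v_1u_1$ is a bridge, hence it opens no shortcut strictly inside $G$ or strictly inside $H$; consequently $d_\Gamma(v_k,v_\ell)=d_G(v_k,v_\ell)$ and $d_\Gamma(u_s,u_t)=d_H(u_s,u_t)$, whereas any cross pair must traverse the bridge exactly once, yielding the additive decomposition $d_\Gamma(v_k,u_t)=d_G(v_1,v_k)+1+d_H(u_1,u_t)$.

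With these in hand I would partition the unordered vertex pairs of $\Gamma$ into three classes---both endpoints in $G$, both in $H$, one in each---and sum the Gutman contributions class by class. For the pairs lying inside $G$, I would separate those incident to $v_1$ from the rest: pairs avoiding $v_1$ reproduce their $G$-contribution verbatim, while each pair $\{v_1,v_\ell\}$ carries the factor $d_G(v_1)+1$ in place of $d_G(v_1)$. Reassembling and recognising $Gut(G)$ inside the result, the surplus $+1$ collapses to the single correction $\sum_{\ell=2}^{n}d_G(v_\ell)d_G(v_1,v_\ell)$. The class of pairs inside $H$ is treated identically and contributes $Gut(H)+\sum_{s=2}^{m}d_H(u_s)d_H(u_1,u_s)$.

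The cross class is where the real work sits, and I expect it to be the main obstacle. Here I would substitute the additive distance formula and then subdivide the ranges of $k$ and $t$ according to whether the two degree-incremented vertices $v_1,u_1$ occur as endpoints, so that the factors $d_\Gamma(v_1)$ and $d_\Gamma(u_1)$ are tracked correctly. The generic block $k,t\ge 2$ yields at once
\[
\sum_{k=2}^{n}\sum_{t=2}^{m}d_G(v_k)d_H(u_t)(d_G(v_1,v_k)+d_H(u_1,u_t)+1).
\]
The pairs $\{v_1,u_t\}$ with $t\ge 2$ produce $\sum_{t=2}^{m}(d_G(v_1)+1)d_H(u_t)(d_H(u_1,u_t)+1)$, and the mirror family $\{v_k,u_1\}$ with $k\ge 2$ produces the analogous sum with the roles of $G$ and $H$ exchanged; finally the lone pair $\{v_1,u_1\}$ has distance $1$ and degree product $(d_G(v_1)+1)(d_H(u_1)+1)$. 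At this last point I would invoke the Jaco-specific fact $d_G(v_1)=d_H(u_1)=1$---which follows from property (iv) together with the defining inequality, since $v_1$ is adjacent only to $v_2$---so that the pair $\{v_1,u_1\}$ contributes the constant $2\cdot 2\cdot 1 = 4$.

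Finally I would add the three class totals. The principal difficulty is organisational rather than conceptual: the two vertices $v_1$ and $u_1$ each sit in both a within-graph class and the cross class, so the care needed is to apply the degree increment $+1$ exactly once per incident pair and to keep the additive bridge-distance decomposition exact, neither double-counting nor dropping the pairs that touch $v_1$ or $u_1$. Once this bookkeeping is controlled, collecting the contributions yields a closed-form reduction of $Gut(J^{\rightsquigarrow_{v_1u_1}}_{n,m})$ to $Gut(G)$, $Gut(H)$, and correction sums of the displayed type.
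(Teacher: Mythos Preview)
Your strategy coincides with the paper's: partition the unordered vertex pairs of $\Gamma$ into within-$G$, within-$H$, and cross pairs, then subdivide each class according to whether $v_1$ or $u_1$ is an endpoint, and use $d_G(v_1)=d_H(u_1)=1$ to evaluate the pair $\{v_1,u_1\}$ as the constant $4$.

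There is, however, a substantive discrepancy. You correctly list the mirror family $\{v_k,u_1\}$ with $k\ge 2$, contributing
\[
\sum_{k=2}^{n}\bigl(d_H(u_1)+1\bigr)\,d_G(v_k)\,\bigl(d_G(v_1,v_k)+1\bigr).
\]
The paper's piecewise decomposition lets the inner cross index $t$ run only from $2$ to $m$ and then adds back the single pair $(v_1,u_1)$, so the pairs $(v_k,u_1)$ with $k\ge 2$ never enter the computation, nor does the corresponding term appear in the displayed formula. A direct check at $n=m=2$ (where $\Gamma\cong P_4$ and $Gut(P_4)=19$) gives $15$ from the stated formula; your extra summand supplies the missing $4$.

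So your bookkeeping is sound, but if you carry the plan through faithfully you will arrive at a formula with one additional summand rather than the identity as printed; the asymmetry between the $\{v_1,u_t\}$ term and the absent $\{v_k,u_1\}$ term in the statement is a genuine omission in the paper, not something your argument should try to reproduce.
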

\begin{proof}
Consider the underlying Jaco graphs, $J^*_n(x), J^*_m(x),$ with $n, m \in \Bbb N$ and $n \geq m \geq 2$ with $J_m(x)$ having prime Jaconian vertex $u_i$. Also label the vertices of $J^*_n(x)$ and $J^*_m(x)$; $v_1, v_2, v_3, ..., v_n$ and $u_1, u_2, u_3, ..., u_m$, respectively. Consider $J^{\rightsquigarrow_{v_1u_1}}_{n,m} = J^*_n(x) \cup J^*_m(x) + v_1u_1$. Without loss of generality  apply the piecewise definition:\\ \\
$Gut(J^{\rightsquigarrow_{v_1u_1}}_{n,m}) =\sum\limits_{k=1}^{n-1}\sum\limits_{\ell =k+1}^{n}d_{J^{\rightsquigarrow_{v_1u_1}}_{n,m}}(v_k) d_{J^{\rightsquigarrow_{v_1u_1}}_{n,m}}(v_\ell)d_{J^{\rightsquigarrow_{v_1u_1}}_{n,m}}(v_k, v_\ell) + \sum\limits_{t=1}^{m-1}\sum\limits_{s=t+1}^{m}d_{J^{\rightsquigarrow_{v_1u_1}}_{n,m}}(u_t) d_{J^{\rightsquigarrow_{v_1u_1}}_{n,m}}(u_s)d_{J^{\rightsquigarrow_{v_1u_1}}_{n,m}}(u_t, u_s) +\\ \\ 
\sum\limits_{k=1}^{n}\sum\limits_{t =2}^{m}d_{J^{\rightsquigarrow_{v_1u_1}}_{n,m}}(v_k) d_{J^{\rightsquigarrow_{v_1u_1}}_{n,m}}(u_t)d_{J^{\rightsquigarrow_{v_1u_1}}_{n,m}}(v_k, u_t) + d_{J^{\rightsquigarrow_{v_1u_1}}_{n,m}}(v_1) d_{J^{\rightsquigarrow_{v_1u_1}}_{n,m}}(u_1)d_{J^{\rightsquigarrow_{v_1u_1}}_{n,m}}(v_1, u_1).$\\ \\
\noindent Step 1(a): Consider vertex $v_1$ and vertex $v_\ell, 2 \leq \ell \leq n.$ By applying the definition of the Gutman index to this pair of vertices we have the term:\\ \\
$d_{J^{\rightsquigarrow_{v_1u_1}}_{n,m}}(v_1) d_{J^{\rightsquigarrow_{v_1u_1}}_{n,m}}(v_\ell)d_{J^{\rightsquigarrow_{v_1u_1}}_{n,m}}(v_1, v_\ell) = (d_{J^*_n(x)}(v_1) + 1) d_{J^*_n(x)}(v_\ell)d_{J^*_n(x)}(v_1, v_\ell) =\\\\ d_{J^*_n(x)}(v_1)d_{J^*_n(x)}(v_\ell)d_{J^*_n(x)}(v_1, v_\ell) + d_{J^*_n(x)}(v_\ell)d_{J^*_n(x)}(v_1, v_\ell).$\\ \\
By applying this step $\forall v_\ell, 2 \leq \ell \leq n$ we obtain:\\ \\
$\sum\limits_{\ell=2}^{n}d_{J^*_n(x)}(v_1)d_{J^*_n(x)}(v_\ell)d_{J^*_n(x)}(v_1, v_\ell) + \sum\limits_{\ell=2}^{n}d_{J^*_n(x)}(v_\ell)d_{J^*_n(x)}(v_1, v_\ell).$\\ \\
\noindent Step 1(b): For all ordered pairs of vertices $(v_k, v_\ell)_{k<\ell}$ with $2 \leq k \leq n-1$ and $3 \leq \ell \leq n$ we have that:\\ \\
$\sum\limits_{k=2}^{n-1}\sum\limits_{\ell=k+1}^{n}d_{J^{\rightsquigarrow_{v_1u_1}}_{n,m}}(v_k) d_{J^{\rightsquigarrow_{v_1u_1}}_{n,m}}(v_\ell)d_{J^{\rightsquigarrow_{v_1u_1}}_{n,m}}(v_k, v_\ell) = \sum\limits_{k=2}^{n-1}\sum\limits_{\ell=k+1}^{n}d_{J^*_n(x)}(v_k) d_{J^*_n(x)}(v_\ell)d_{J^*_n(x)}(v_k, v_\ell).$\\ \\ \\
By applying this step $\forall (v_k, v_\ell)_{k<\ell}$, $1 \leq k \leq n-1$ and $2 \leq \ell \leq n$, we obtain:\\ \\ 
$\sum\limits_{k=1}^{n-1}\sum\limits_{\ell=k+1}^{n}d_{J^*_n(x)}(v_k) d_{J^*_n(x)}(v_\ell)d_{J^*_n(x)}(v_k, v_\ell) + \sum\limits_{\ell=2}^{n}d_{J^*_n(x)}(v_\ell)d_{J^*_n(x)}(v_1, v_\ell) =\\\\ 
Gut(J^*_n(x)) + \sum\limits_{\ell=2}^{n}d_{J^*_n(x)}(v_\ell)d_{J^*_n(x)}(v_1, v_\ell).$\\ \\
\noindent Step 2: Similar to Step 1 we have that:\\ \\
$\sum\limits_{t=1}^{m-1}\sum\limits_{s =t+1}^{m}d_{J^{\rightsquigarrow_{v_1u_1}}_{n,m}}(u_t)d_{J^{\rightsquigarrow_{v_1u_1}}_{n,m}}(u_s)d_{J^{\rightsquigarrow_{v_1u_1}}_{n,m}}(u_t, u_s) =\sum\limits_{t=1}^{m-1}\sum\limits_{s=t+1}^{m}d_{J^*_m(x)}(u_t) d_{J^*_m(x)}(u_s)d_{J^*_m(x)}(u_t, u_s) +\\\\ \sum\limits_{s=2}^{m}d_{J^*_m(x)}(u_s)d_{J^*_m(x)}(u_1, u_s) = Gut(J^*_m(x)) + \sum\limits_{s=2}^{m}d_{J^*_m(x)}(u_s)d_{J^*_m(x)}(u_1, u_s).$\\ \\
\noindent Step 3: To conclude this step we will provide the next partial summation as a piecewise summation, to be:\\ \\
$\sum\limits_{k=1}^{n}\sum\limits_{t =2}^{m}d_{J^{\rightsquigarrow_{v_1u_1}}_{n,m}}(v_k) d_{J^{\rightsquigarrow_{v_1u_1}}_{n,m}}(u_t)d_{J^{\rightsquigarrow_{v_1u_1}}_{n,m}}(v_k, u_t) =\sum\limits_{t =2}^{m}d_{J^{\rightsquigarrow_{v_1u_1}}_{n,m}}(v_1) d_{J^{\rightsquigarrow_{v_1u_1}}_{n,m}}(u_t)d_{J^{\rightsquigarrow_{v_1u_1}}_{n,m}}(v_1, u_t) +\\ \sum\limits_{k=2}^{n}\sum\limits_{t =2}^{m}d_{J^{\rightsquigarrow_{v_1u_1}}_{n,m}}(v_k) d_{J^{\rightsquigarrow_{v_1u_1}}_{n,m}}(u_t)d_{J^{\rightsquigarrow_{v_1u_1}}_{n,m}}(v_k, u_t).$\\ \\ \\
\noindent Step 3(a): Consider vertex $v_1$ and vertex $u_t, 2 \leq t \leq m.$ By applying the definition of the Gutman index to this pair of vertices we have the term:\\ \\
$d_{J^{\rightsquigarrow_{v_1u_1}}_{n,m}}(v_1) d_{J^{\rightsquigarrow_{v_1u_1}}_{n,m}}(u_t)d_{J^{\rightsquigarrow_{v_1u_1}}_{n,m}}(v_1, u_t) =(d_{J^*_n(x)}(v_1) + 1)d_{J^*_m(x)}(u_t)(d_{J^*_m(x)}(u_1, u_t) +1).$\\ \\
By applying this step $\forall u_t, 2 \leq t \leq m$ we obtain:\\ \\
$\sum\limits_{t=2}^{m}(d_{J^*_n(x)}(v_1)+1)d_{J^*_m(x)}(u_t)(d_{J^*_m(x)}(u_1, u_t)+1).$\\ \\ \\
\noindent Step 3(b): Consider vertex $v_k, 2 \leq k \leq n$ and vertex $u_t, 2 \leq t \leq m.$ By applying the definition of the Gutman index to this pair of vertices we have the term:\\ \\
$d_{J^{\rightsquigarrow_{v_1u_1}}_{n,m}}(v_k) d_{J^{\rightsquigarrow_{v_1u_1}}_{n,m}}(u_t)d_{J^{\rightsquigarrow_{v_1u_1}}_{n,m}}(v_k, u_t) =d_{J^*_n(x)}(v_k)d_{J^*_m(x)}(u_t)(d_{J_n^*(x)}(v_1, v_k) + d_{J^*_m(x)}(u_1, u_t) +1).$\\ \\
By applying the step $\forall v_k, 2 \leq k \leq n$ and $\forall u_t, 2 \leq t \leq m$, we obtain:\\ \\ \\
$\sum\limits_{k+2}^{n}\sum\limits_{t=2}^{m}d_{J^*_n(x)}(v_k)d_{J^*_m(x)}(u_t)(d_{J_n^*(x)}(v_1, v_k) + d_{J^*_m(x)}(u_1, u_t) +1).$\\ \\ \\
\noindent Step 4: It is easy to see that:\\ \\
$d_{J^{\rightsquigarrow_{v_1u_1}}_{n,m}}(v_1) d_{J^{\rightsquigarrow_{v_1u_1}}_{n,m}}(u_1)d_{J^{\rightsquigarrow_{v_1u_1}}_{n,m}}(v_1, u_1) = 4.$\\ \\
\noindent \textbf{Final summation step:} Adding Steps 1 to 4 provides the result:\\ \\ $Gut(J^{\rightsquigarrow_{v_1u_1}}_{n,m}) = Gut(J^*_n(x)) + Gut(J^*_m(x)) + \sum\limits_{\ell=2}^{n}d_{J^*_n(x)}(v_\ell)d_{J^*_n(x)}(v_1, v_\ell) + \sum\limits_{s=2}^{m}d_{J^*_m(x)}(u_s)d_{J^*_m(x)}(u_1, u_s) +\\ \sum\limits_{t=2}^{m}(d_{J^*_n(x)}(v_1)+1)d_{J^*_m(x)}(u_t)(d_{J^*_m(x)}(u_1, u_t)+1) +\sum\limits_{k=2}^{n}\sum\limits_{t=2}^{m}d_{J^*_n(x)}(v_k)d_{J^*_m(x)}(u_t)(d_{J^*_n(x)}(v_1, v_k)+\\\\ d_{J^*_m(x)}(u_1, u_t) + 1) + 4.$
\end{proof}
\section{Conclusion} 
For the simple case $f(x) = x$ the calculation of the Gutman index for Jaco graph and the edge-joint between them is immediately complicated. Finding a result similar to Theorem 3.1 for $J^*_n(x)\rightsquigarrow_{v_iu_j}J^*_m(x)$, $i\neq 1$ or $j\neq 1$ \emph{(non-trivial edge-joints)} remains open. The single most important challege is to find a closed formula for the number of edges in $J_n(x)$. Such closed formula will enable finding a closed formula for distances between given vertices and a simplied formula for many invariants of Jaco graphs might result from such finding. Hence, important open questions remain such as: \emph{Is there a closed formula for the number of edges of} $J_n(x),$ $n \in \Bbb N$? \emph{Is there a closed formula for the cardinality of the Jaconian set} $\Bbb J(J_n(x))$ \emph{of} $J_n(x), n \in \Bbb N$? \emph{Is there a closed formula for} $d_{J^*_n(x)}(v_1, v_n)$ \emph{in} $J^*_n(x), n \in \Bbb N$?. Refer to [7] for further reading.\\ \\
\textbf{\emph{Open access:}} This paper is distributed under the terms of the Creative Commons Attribution License which permits any use, distribution and reproduction in any medium, provided the original author(s) and the source are credited. \\ \\
References (Limited) \\ \\
$[1]$  V. Andova, D. Dimitrov, J. Fink,  R. \v Skrekovski, \emph{Bounds on Gutman Index}, MATCH Communications in Mathematical and in Computer Chemistry, Vol 67 (2012), pp 515-524.\\
$[2]$ J.A. Bondy, U.S.R. Murty,\emph {Graph Theory with Applications,} Macmillan Press, London, (1976). \\
$[3]$  P. Dankelmann, I. Gutman, S. Mukwembi, H.C. Swart, \emph{The edge-Wiener index of a graph}, Discrete Mathematics, Vol 309 (2009), pp 3452-3457.\\
$[4]$ I. Gutman, \emph{Selected properties of the Schultz molecular topological index}, Journal of Chemical Information and Computer Sciences, Vol 34, (1994) pp 1087-1089.\\ 
$[5]$ J. Kok, P.  Fisher, B. Wilkens, M. Mabula, V. Mukungunugwa, \emph{Characteristics of Finite Jaco Graphs, $J_n(1), n \in \Bbb N$}, arXiv: 1404.0484v1 [math.CO], 2 April 2014. \\
$[6]$ J. Kok, P.  Fisher, B. Wilkens, M. Mabula, V. Mukungunugwa,  \emph{Characteristics of Jaco Graphs, $J_\infty(a), a \in \Bbb N$}, arXiv: 1404.1714v1 [math.CO], 7 April 2014.\\ 
$[7]$ J. Kok, C. Susanth, S.J. Kalayathankal, \emph{A Study on Linear Jaco Graphs}, arXiv:1506.06538v1, [math.CO], 22 June 2015, Journal of Informatics and Mathematical Sciences (Accepted).
\end{document}